\documentclass[12pt,reqno]{amsart}
\usepackage{fullpage}
\usepackage{times}
\usepackage{colonequals}
\usepackage{amsmath,amssymb,amsthm,url}
\usepackage{xcolor}
\usepackage[utf8]{inputenc}
\usepackage[english]{babel}
\usepackage{comment}
\usepackage{bbm}
\usepackage{enumerate}
\usepackage{bm}
\usepackage{graphicx}
\usepackage{mathrsfs}
\usepackage[colorlinks=true, pdfstartview=FitH, linkcolor=blue, citecolor=blue, urlcolor=blue]{hyperref}

\newtheorem{thm}{Theorem}[section]
\newtheorem{lem}[thm]{Lemma}

\newtheorem{claim}[thm]{Claim}

\theoremstyle{definition}

\newcommand{\N}{\mathbb{N}}
\newcommand{\Z}{\mathbb{Z}}

\newcommand{\bx}{\mathbf{x}}

\newenvironment{poc}{\begin{proof}[Proof of claim]}{\end{proof}}

\title{An Erd\H{o}s--Ko--Rado theorem for binary codes}
\author{Shamil Asgarli}
\address{Department of Mathematics \& Computer Science \\ Santa Clara University \\ Santa Clara, CA 95053 \\ United States}
\email{sasgarli@scu.edu}

\author{Chi Hoi Yip}
\address{School of Mathematics\\ Georgia Institute of Technology\\ Atlanta, GA 30332\\ United States}
\email{cyip30@gatech.edu}
\keywords{Erd\H{o}s--Ko--Rado theorem, binary codes, words, Hamming scheme}
\subjclass[2020]{05D05}

\begin{document}

\begin{abstract}
We study intersecting families of words from the Erd\H{o}s--Ko--Rado perspective. When the alphabet size is $2$, a maximum intersecting family is not necessarily a star. However, we prove that every maximum $3$-wise intersecting family is a star. We also present a new proof of the known result for alphabets of size at least $3$: maximum intersecting families of words are exactly the stars.
\end{abstract}

\maketitle

\section{Introduction}

The Erd\H{o}s--Ko--Rado (EKR) theorem \cite{EKR} is a cornerstone result in extremal combinatorics. The theorem states that if $n$ and $k$ are positive integers with $n \geq 2k$, then the maximum size of an intersecting family of $k$-element subsets of an $n$-element set is $\binom{n-1}{k-1}$. Furthermore, when $n > 2k$, the extremal families are precisely the ``stars'', consisting of all $k$-element subsets that share a fixed common element.

Since its inception, the philosophy of the EKR theorem has blossomed into an active research field. The main goal is to characterize the size and structure of maximum intersecting families. This line of inquiry has been successfully ported to structures beyond set families. Analogous EKR-type theorems exist for vector spaces, permutations, orthogonal arrays, and other algebraic structures. For a comprehensive overview of these generalizations, especially through algebraic graph theory and association schemes, we refer the reader to the excellent book by Godsil and Meagher \cite{GM16}. There are also well-studied variations. These include $r$-wise intersecting families (any $r$ members intersect at the same point), $t$-intersecting families (any two intersect in at least $t$ points), or a combination of both.

In this paper, we focus on EKR-type questions for words. Let $X$ be a finite alphabet of size $q \geq 2$, and let $m \in \N$. We consider the set $X^m$ of all words of length $m$. Two words $\bx = (x_1, \dots, x_m)$ and $\mathbf{y} = (y_1, \dots, y_m)$ in $X^m$ are said to \emph{intersect} if they agree in at least one coordinate, that is, if $x_j = y_j$ for some $1 \leq j \leq m$. A collection $T \subseteq X^m$ is an \emph{intersecting family} if every pair of words in $T$ intersects. The prototypical example of a large intersecting family is a \emph{star} (or \emph{canonical intersecting family}), which is defined by fixing a specific letter at a specific coordinate. All stars in $X^m$ have cardinality $q^{m-1}$, and it is well-known that they have maximum size; for the sake of completeness, we provide a short proof in the following lemma.

\begin{lem}\label{lem:weak}
If $T \subseteq X^m$ is an intersecting family of words of length $m$ on an alphabet $X$ of size $q\geq 2$, then $|T|\leq q^{m-1}$.
\end{lem}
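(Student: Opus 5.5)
The plan is to partition $X^m$ into $q^{m-1}$ classes, each of size $q$, in which any two distinct words are disjoint, i.e.\ differ in every coordinate. An intersecting family $T$ can contain at most one word from each class, which immediately gives $|T|\le q^{m-1}$.

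To build the classes, identify $X$ with the cyclic group $\Z/q\Z$; any bijection will do, since intersection only depends on coordinatewise equality. Then let $\mathbf{1}=(1,1,\dots,1)\in X^m$ act on $X^m$ by translation, $\bx\mapsto \bx+\mathbf{1}$. The orbit of $\bx$ is
\[
C_{\bx}=\{\bx+k\mathbf{1} : k\in \Z/q\Z\}.
\]
These orbits are the cosets of the subgroup $\langle \mathbf{1}\rangle$ in $(\Z/q\Z)^m$. Since $\mathbf{1}$ has order exactly $q$, each coset has size $q$, and there are $q^m/q=q^{m-1}$ of them.

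The key verification is that two distinct words in the same class never intersect. If $\bx+k\mathbf{1}$ and $\bx+l\mathbf{1}$ agree in coordinate $j$, then $x_j+k=x_j+l$, so $k=l$ in $\Z/q\Z$ and the two words coincide. Hence each class is a set of $q$ pairwise non-intersecting words. An intersecting family $T$ therefore meets each class in at most one word, and summing over the $q^{m-1}$ classes gives $|T|\le q^{m-1}$.

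There is no real obstacle here. The only point needing care is that the shift by $\mathbf{1}$ moves every coordinate simultaneously, which makes the classes pairwise non-intersecting, and that its order is exactly $q$, which fixes the count of classes. An alternative route would be a Katona-style averaging over all such partitions, but the direct coset decomposition already suffices.
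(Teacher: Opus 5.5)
Your proof is correct and is essentially the paper's argument. The paper's sets $S(\boldsymbol{\delta})$, for $\boldsymbol{\delta}\in\{0\}\times\Z_q^{m-1}$, are exactly your cosets of $\langle\mathbf{1}\rangle$, with each representative normalized to have first coordinate $0$, and an intersecting family meets each of them in at most one word; your explicit checks that the shift has order $q$ and that distinct words in a class disagree in every coordinate spell out what the paper states as an observation.
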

\begin{proof} Without loss of generality, $X$ is given by $\Z_q \colonequals \mathbb{Z}/q\mathbb{Z}$. For each $\boldsymbol{\delta}\in \{0\} \times \Z_q^{m-1}$, consider the set $S(\boldsymbol{\delta})=\{\boldsymbol{\delta}, \boldsymbol{\delta}+\mathbf{1}, \dots, \boldsymbol{\delta}+(\mathbf{q-1})\}$ consisting of $q$ words, where $\mathbf{c}=(c, \dots, c)$ represents a constant vector in $\Z_q^{m}$. As we vary $\boldsymbol{\delta}$, these $q^{m-1}$ resulting sets $S(\boldsymbol{\delta})$ partition $\Z_q^m$. Observe that for each $\boldsymbol{\delta}\in \{0\} \times \Z_q^{m-1}$, we have $|T \cap S(\boldsymbol{\delta})|\leq 1$ since $T$ is intersecting. Thus, $|T|\leq q^{m-1}$.
\end{proof}

The Erd\H{o}s--Ko--Rado phenomenon for words (equivalently, integer sequences or the Hamming scheme setting) has a rich history. Early extremal results on binary and integer sequences in Hamming-type spaces go back to Kleitman \cite{Kle66}, and explicit EKR-type results for words were established in the 1980s by Livingston \cite{L79}, Frankl--F\"uredi \cite{FF80}, Moon \cite{Moo82}, Gronau \cite{Gro83}, and Engel--Frankl \cite{EF86}. Later, Frankl--Tokushige \cite{FT99} resolved a conjecture from \cite{FF80}. For a modern account of EKR-type results on words, see \cite[Chapter 10]{GM16}.

Previous works on the subject have focused mostly on the case where $q\geq 3$. In this range, the stars are the \emph{unique} maximum intersecting families, a result first proved by Livingston \cite{L79}.

\begin{thm}\label{thm:alphabet-intersecting}
Let $X$ be a finite alphabet with $|X|=q\geq 3$ and let $m\in\mathbb{N}$. Every maximum intersecting family $T$ of words of length $m$ is a star.
\end{thm}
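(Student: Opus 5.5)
Take $X=\Z_q$ and let $T$ be intersecting with $|T|=q^{m-1}$. The proof of Lemma~\ref{lem:weak} gives equality in every class: $T$ meets each set $S(\boldsymbol{\delta})$ in exactly one word. The partition there is not special. For any permutations $\sigma_1,\dots,\sigma_m$ of $\Z_q$, relabelling coordinate $j$ by $\sigma_j$ turns an intersecting family into an intersecting family. Hence $T$ meets every ``line''
\[
\{(\sigma_1(t+a_1),\dots,\sigma_m(t+a_m)) : t\in\Z_q\}
\]
in exactly one word. Equivalently, any $q$ words that pairwise disagree in every coordinate (a set of pairwise disjoint words of size $q$) contain exactly one member of $T$. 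I will prove the theorem by induction on $m$ using this ``exactly one per disjoint $q$-set'' property. The case $m=1$ is trivial: $T$ is a single letter, which is a star.

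\textbf{Step 1: reduce to restrictions on one coordinate.} Fix the last coordinate and, for $a\in\Z_q$, set $T_a=\{\mathbf{u}\in\Z_q^{m-1} : (\mathbf{u},a)\in T\}$. Take any disjoint $q$-set $\{\mathbf{u}^{(1)},\dots,\mathbf{u}^{(q)}\}$ in $\Z_q^{m-1}$ and any bijection $\pi$ from it to $\Z_q$. Then the words $(\mathbf{u}^{(i)},\pi(i))$ form a disjoint $q$-set in $\Z_q^m$, so exactly one of them lies in $T$. Using $q\ge 3$, I will compare bijections $\pi$ that differ by a transposition, and in particular exploit $3$-cycles, which are available only when $q\geq 3$. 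The aim is to show one of two alternatives:
\begin{enumerate}[(a)]
\item $T_a=T_b$ for all $a,b$, so that $T=T'\times\Z_q$ with $T'$ of size $q^{m-2}$, intersecting, and having the exact-one property; or
\item $T_a=\Z_q^{m-1}$ for exactly one $a$ and $T_b=\emptyset$ otherwise, so that $T$ is the star at the last coordinate.
\end{enumerate}

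\textbf{Step 2: finish.} In case (a), induction shows $T'$ is a star, and so $T$ is a star. Case (b) is immediate.

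\textbf{Main obstacle.} The hard part is Step~1, the dichotomy. The idea is to count membership of a fixed $\mathbf{u}$ in the sets $T_a$. Using the exact-one property for all $q!$ bijections $\pi$, each disjoint $q$-set $U$ determines a ``choice'' function. It must pick exactly one pair $(\mathbf{u},\pi(\mathbf{u}))$ with $\mathbf{u}\in U$ for every $\pi$. Call $\mathbf{u}$ with $a\in T_{\mathbf{u}}$ ``good at $a$''. A matching argument, Hall-type on the complete bipartite graph $U\times\Z_q$, shows that the set of good pairs meets every perfect matching exactly once. For $q\ge3$ this forces the good pairs to be either a full row $\{\mathbf{u}\}\times\Z_q$ or a full column $U\times\{a\}$. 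Indeed, the permanent-type identity on a $0/1$ $q\times q$ matrix meeting every permutation exactly once fails for $q=2$ via the diagonal example $\begin{pmatrix}1&0\\0&0\end{pmatrix}$ versus the anti-diagonal pattern, but holds for $q\ge3$. I would verify this matrix lemma first, by checking $2\times 2$ minors and using a third row or column to rule out mixed patterns.

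It then remains to show that the row/column type is consistent across all disjoint $q$-sets $U$. Overlapping $U$'s share words, and intersecting forces agreement; this gives (a) in the all-rows case and (b) in the all-columns case. I expect this consistency argument to be the delicate point.
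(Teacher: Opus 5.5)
Your reduction is correct up to the local dichotomy. Equality in Lemma~\ref{lem:weak}, transported by coordinatewise relabelling, does show that $T$ meets every set of $q$ pairwise disjoint words exactly once. Pairing a disjoint $q$-set $U\subseteq\Z_q^{m-1}$ with the bijections $U\to\Z_q$ then gives a $q\times q$ $0/1$ matrix that meets every permutation exactly once, so it is a full row or a full column.

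However, this matrix lemma holds for $q=2$ too. Two $1$'s in distinct rows and columns lie on a common permutation, so the $1$'s sit in a single row or a single column, which they must then fill. Your example $\begin{pmatrix}1&0\\0&0\end{pmatrix}$ meets the anti-diagonal permutation zero times, so it does not satisfy the hypothesis. So nothing you actually argue uses $q\ge 3$, and neither does any comparison of bijections via transpositions or $3$-cycles. Since the theorem is false for $q=2$, all of the content lies in the ``consistency'' step you leave open. For example, take the even-weight family $\{000,011,101,110\}\subseteq\{0,1\}^3$. The disjoint pairs $\{00,11\}$ and $\{01,10\}$ of $\{0,1\}^2$ are full columns at $0$ and at $1$ respectively, and they never overlap, so ``overlapping $U$'s share words'' gives nothing there.

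The missing idea is a propagation step, which is Case~1 of the paper's proof. Suppose $S(\boldsymbol{\delta})\subseteq\widetilde{T}_i$ and $\mathbf{v}$ has no zero entry. Each word of $S(\boldsymbol{\delta})+\mathbf{v}$ is disjoint from a word of $S(\boldsymbol{\delta})$, so it avoids every $\widetilde{T}_j$ with $j\neq i$. The local dichotomy then forces $S(\boldsymbol{\delta}+\mathbf{v})\subseteq\widetilde{T}_i$. Since $q\ge 3$, every $\mathbf{u}$ can be written as $\boldsymbol{\delta}+\mathbf{v}+\mathbf{v}'$ with $\mathbf{v},\mathbf{v}'$ nowhere zero.

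In your language the step reads as follows. For $q\ge 3$, any two words $\mathbf{u},\mathbf{w}\in\Z_q^{m-1}$ have a common disjoint word $\mathbf{v}$: take $v_j\notin\{u_j,w_j\}$. Any two disjoint words extend to a disjoint $q$-set. So by your dichotomy, if $\mathbf{u}$ lies in exactly one $T_a$, then so does $\mathbf{v}$, and then so does $\mathbf{w}$, with the same $a$. Hence either some word lies in exactly one $T_a$, which gives $T_a=\Z_q^{m-1}$ and alternative (b). Or every word lies in all or none of the $T_a$, which gives (a).

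With this added, your proof is a valid variant of the paper's. It uses the same split by one coordinate and the same two cases. The difference is that you derive the local dichotomy from the exact-one property over all disjoint $q$-sets and a permutation lemma. The paper instead uses Claim~\ref{claim:upper-bound-alphabet} and double counting over the translates $S(\boldsymbol{\delta})$.
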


Note that Theorem~\ref{thm:alphabet-intersecting} fails when $q=2$ (that is, for binary codes). In fact, there are precisely $2^{2^{m-1}}$ maximum intersecting families. To see this, partition $\{0,1\}^m$ into $2^{m-1}$ complementary pairs, where each pair sums to the constant vector $\mathbf{1}$. Choosing one word from each pair yields an intersecting family of size $2^{m-1}$, and therefore a maximum intersecting family. Conversely, every intersecting family of size $2^{m-1}$ arises in this way, since the two words in each complementary pair do not intersect. By contrast, there are only $2m$ stars in $\{0,1\}^m$, so the vast majority of maximum intersecting families are not stars.

To the best of our knowledge, no previous work has further examined the case $q=2$. One way to remedy the conclusion of Theorem~\ref{thm:alphabet-intersecting} for binary words is to assume a stronger hypothesis on the intersecting family. A collection $T$ of binary words is \emph{3-wise intersecting} if every three elements of $T$ share at least one common bit at the same index. Our main result shows that this stronger condition is indeed sufficient to guarantee that the family is a star.

\begin{thm}\label{thm:alphabet-3-wise-intersecting}
Every maximum $3$-wise intersecting family of binary words in $\{0, 1\}^m$ is a star.
\end{thm}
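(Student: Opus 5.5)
The plan has three steps: pin down the maximum size, extract a closure property of maximum $3$-wise intersecting families, and then run a minimality argument to force a star.

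\textbf{Step 1 (the maximum size).} A $3$-wise intersecting family is in particular intersecting (take a triple with a repeated word). So Lemma~\ref{lem:weak} gives $|T|\le 2^{m-1}$. Every star is $3$-wise intersecting and has size $2^{m-1}$, so the maximum is exactly $2^{m-1}$. Let $T$ be a $3$-wise intersecting family with $|T|=2^{m-1}$. By the discussion after Theorem~\ref{thm:alphabet-intersecting}, $T$ contains exactly one word from each complementary pair $\{\bx,\bar{\bx}\}$, where $\bar{\bx}=\mathbf{1}-\bx$.

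\textbf{Step 2 (closure property).} For $\bx,\mathbf{y}\in T$, let $A(\bx,\mathbf{y})=\{j: x_j=y_j\}$; this set is nonempty. I claim that every word $\mathbf{u}$ agreeing with $\bx$ on $A(\bx,\mathbf{y})$ lies in $T$.

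Indeed, $\bar{\mathbf{u}}$ disagrees with $\bx$, and hence with $\mathbf{y}$, at every coordinate of $A(\bx,\mathbf{y})$. Off $A(\bx,\mathbf{y})$ the words $\bx$ and $\mathbf{y}$ already disagree, so $\bx,\mathbf{y},\bar{\mathbf{u}}$ have no common coordinate. By $3$-wise intersection $\bar{\mathbf{u}}\notin T$, and therefore $\mathbf{u}\in T$ by Step 1.

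\textbf{Step 3 (minimality).} Choose $\bx,\mathbf{y}\in T$ with $S=A(\bx,\mathbf{y})$ of minimum size among all pairs in $T$. Suppose $|S|\ge 2$ and fix $i\in S$.

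Let $\mathbf{v}$ equal $\bx$ on $S$ and $\bar{\bx}$ off $S$; by Step 2, $\mathbf{v}\in T$. Let $\mathbf{w}$ equal $\bar{\bx}$ on $S\setminus\{i\}$, equal $x_i$ at $i$, and equal $\bx$ off $S$. By Step 1, either $\mathbf{w}\in T$ or $\bar{\mathbf{w}}\in T$.

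If $\mathbf{w}\in T$, then $A(\mathbf{w},\mathbf{v})=\{i\}$, which is strictly smaller than $S$.

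If $\bar{\mathbf{w}}\in T$, note that $\bar{\mathbf{w}}$ equals $\bx$ on $S\setminus\{i\}$ and $\bar{\bx}$ elsewhere. Hence $A(\bar{\mathbf{w}},\bx)=S\setminus\{i\}$, which is again strictly smaller than $S$.

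Either case contradicts minimality, so $S=\{i\}$ for some $i$. Step 2 then puts every word with $i$-th bit equal to $x_i$ into $T$. There are $2^{m-1}$ such words, so $T$ is exactly that star.

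The delicate point is Step 3: choosing the auxiliary word $\mathbf{w}$ so that both $\mathbf{w}$ and $\bar{\mathbf{w}}$ produce a smaller agreement set. Step 2 is the key structural input that makes this possible.
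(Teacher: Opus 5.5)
Your proof is correct, and it takes a genuinely different route from the paper's.

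The paper argues by contradiction. Assuming $T$ is not a star, it proves by induction on $k$ (Claim~\ref{claim}) that every prefix of length $k$ occurs exactly $2^{m-k-1}$ times in $T$. The non-star hypothesis is used to show each $T_{\boldsymbol{\delta}}$ is nonempty, and the $3$-wise condition, applied with a word from the complementary prefix class, shows each $\widetilde{T_{\boldsymbol{\delta}}}$ is intersecting. At $k=m-1$ this produces two words $(0,\dots,0,u)$ and $(1,\dots,1,u)$ agreeing only in the last coordinate. The $3$-wise condition then forces every word of $T$ to end in $u$.

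That final step is exactly your Step 2 in the case $|A(\bx,\mathbf{y})|=1$. What you do differently is find a pair agreeing in exactly one coordinate by the short minimality argument of Step 3. That argument uses only the fact that $T$ contains one word from each complementary pair. So it shows that \emph{every} maximum intersecting binary family has such a pair, and the $3$-wise hypothesis enters only once, in the closure step.

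Your proof is therefore direct rather than by contradiction, and much shorter. The paper's partition-by-prefix viewpoint has a different payoff: it is what the authors carry over, by partitioning on the first letter, in Section~\ref{sec:q>2} to reprove Theorem~\ref{thm:alphabet-intersecting} for $q\ge 3$. There, no complementary-pair structure is available for your argument to exploit.

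One small simplification: the word $\mathbf{v}$ in Step 3 is just $\mathbf{y}$. Indeed, $\mathbf{y}$ agrees with $\bx$ on $S$ and, being binary, equals $\bar{\bx}$ off $S$, so you do not need Step 2 to place $\mathbf{v}$ in $T$.
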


We prove Theorem~\ref{thm:alphabet-3-wise-intersecting} in Section~\ref{sec:3-wise}. Inspired by the proof of Theorem~\ref{thm:alphabet-3-wise-intersecting}, we also give a new proof of Theorem~\ref{thm:alphabet-intersecting} in Section~\ref{sec:q>2}. 

\section{Proof of Theorem~\ref{thm:alphabet-3-wise-intersecting}}\label{sec:3-wise}

We outline the strategy before giving the details. We argue by contradiction, assuming $T$ is not a star. The key step is Claim~\ref{claim}, which shows that the $3$-wise intersecting hypothesis forces each binary prefix of length $k$ to appear exactly $2^{m-k-1}$ times in $T$. Applying this with $k=m-1$ yields two words $\mathbf{x}$ and $\mathbf{y}$ in $T$ sharing the same last bit $u$, and the $3$-wise intersecting condition then forces every word in $T$ to have last coordinate $u$; this makes $T$ a star, a contradiction.

\begin{proof}[Proof of Theorem~\ref{thm:alphabet-3-wise-intersecting}]
Let $T$ be a maximum $3$-wise intersecting family. By Lemma~\ref{lem:weak}, $|T|\leq 2^{m-1}$. On the other hand, a star has size $2^{m-1}$, and hence $|T|=2^{m-1}$. Assume for contradiction that $T$ is not a star. 

For each $0\leq k\leq m$, we can partition $T$ according to the first $k$ bits. More precisely, given $\boldsymbol{\delta}=(\delta_1, \dots, \delta_k) \in\{0,1\}^k$, we define 
\[
T_{\boldsymbol{\delta}} = \{\mathbf{x}\in T \ | \ x_i = \delta_i \text{ for each } 1\leq i\leq k \}.
\]
When $k=0$, we define $T_{\boldsymbol{\delta}}=T$. Then $T$ is partitioned as
\[
T = \bigcup_{\boldsymbol{\delta}\in\{0,1\}^{k}} T_{\boldsymbol{\delta}}.
\]

Our key observation is the following claim.
\begin{claim}\label{claim} 
Assume that $T$ is $3$-wise intersecting and not a star. For each $0\leq k\leq m-1$ and each $\boldsymbol{\delta} \in \{0,1\}^k$, we have
\begin{equation}\label{eq:binary-vector-induction}
|T_{\boldsymbol{\delta}}| = 2^{m-k-1}.
\end{equation}
Equivalently, among the words of $T$, each prefix of length $k$ appears exactly $2^{m-k-1}$ times.
\end{claim}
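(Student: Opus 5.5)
The plan is induction on $k$, using the maximality $|T|=2^{m-1}$ established just before the claim. The case $k=0$ is exactly $|T|=2^{m-1}$. For the inductive step, fix $0\le k\le m-2$ and $\boldsymbol{\delta}\in\{0,1\}^k$, and assume \eqref{eq:binary-vector-induction} holds at level $k$. Since $|T_{(\boldsymbol{\delta},0)}|+|T_{(\boldsymbol{\delta},1)}|=|T_{\boldsymbol{\delta}}|=2^{m-k-1}$, it suffices to prove the upper bound $|T_{(\boldsymbol{\delta},b)}|\le 2^{m-k-2}$ for each $b\in\{0,1\}$.

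The upper bound will come from the $3$-wise condition, applied with one auxiliary word. Write $\bar{\boldsymbol{\delta}}$ for the complement of $\boldsymbol{\delta}$. Suppose there is a word $\mathbf{w}\in T$ whose first $k+1$ bits are $(\bar{\boldsymbol{\delta}},1-b)$. For $\mathbf{x},\mathbf{y}\in T_{(\boldsymbol{\delta},b)}$ (allowing $\mathbf{x}=\mathbf{y}$), the triple $\mathbf{x},\mathbf{y},\mathbf{w}$ has a common bit. This bit cannot lie in the first $k+1$ coordinates, so it lies at some $j>k+1$. To each such $\mathbf{x}$ associate
\[
A(\mathbf{x})=\{\,j>k+1 : x_j=w_j\,\}\subseteq\{k+2,\dots,m\}.
\]
The map $\mathbf{x}\mapsto A(\mathbf{x})$ is injective on $T_{(\boldsymbol{\delta},b)}$, and its image is a pairwise intersecting family of subsets of an $(m-k-1)$-set. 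Such a family contains at most one set from each complementary pair, which gives $|T_{(\boldsymbol{\delta},b)}|\le 2^{m-k-2}$; this is Lemma~\ref{lem:weak} with $q=2$.

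It remains to guarantee that the witness $\mathbf{w}$ exists, that is, $T_{(\bar{\boldsymbol{\delta}},1-b)}\neq\emptyset$. Since $|T|=2^{m-1}$ and $T$ is intersecting, $T$ contains exactly one word from each complementary pair $\{\mathbf{x},\mathbf{x}+\mathbf{1}\}$. Complementation maps the words with prefix $(\boldsymbol{\delta},b)$ bijectively onto those with prefix $(\bar{\boldsymbol{\delta}},1-b)$, so
\[
|T_{(\bar{\boldsymbol{\delta}},1-b)}| = 2^{m-k-1}-|T_{(\boldsymbol{\delta},b)}|.
\]
Hence the witness fails to exist only in the degenerate case $T_{(\boldsymbol{\delta},b)}=T_{\boldsymbol{\delta}}$, where every word with prefix $\boldsymbol{\delta}$ has bit $b$ at position $k+1$, and every word with prefix $\bar{\boldsymbol{\delta}}$ has bit $b$ there as well.

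Ruling out this degenerate case is the main obstacle, and here the hypothesis that $T$ is not a star must enter. When $k=0$ the degenerate case says that all of $T$ has first bit $b$, so $T$ is a star, which is a contradiction. For general $k$, I would first exploit that $T_{(\boldsymbol{\delta},b)}$ then has size $2^{m-k-1}$. It must therefore contain two words with complementary suffixes, $\mathbf{x}=(\boldsymbol{\delta},b,\mathbf{s})$ and $\mathbf{y}=(\boldsymbol{\delta},b,\mathbf{s}+\mathbf{1})$. The $3$-wise condition then forces every $\mathbf{z}\in T$ to agree with $(\boldsymbol{\delta},b)$ somewhere among its first $k+1$ coordinates. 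I would combine this restriction on the prefixes occurring in $T$ with the inductive hypothesis at level $k$: every prefix of length $k$, in particular those disjoint from $\boldsymbol{\delta}$, occurs $2^{m-k-1}>0$ times. The aim is to conclude that all such words carry bit $b$ at position $k+1$, and then, by iterating over prefixes, that coordinate $k+1$ is constantly $b$ on $T$, making $T$ a star and giving a contradiction. If this direct route stalls, I would instead choose $k$ minimal with a degenerate prefix and use that minimality to propagate the constant bit.
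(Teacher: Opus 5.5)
Your reduction is correct up to the degenerate case, but the step that rules that case out is missing, and the route you sketch cannot supply it. What works: the witness-based upper bound (your sets $A(\mathbf{x})$ just re-encode the fact that the suffixes of words in $T_{(\boldsymbol{\delta},b)}$ form an intersecting family, which is what the paper uses). The identity $|T_{(\boldsymbol{\delta},b)}|+|T_{(\bar{\boldsymbol{\delta}},1-b)}|=2^{m-k-1}$ is also correct. It leaves only the case $T_{(\boldsymbol{\delta},1-b)}=T_{(\bar{\boldsymbol{\delta}},1-b)}=\emptyset$. In that case, by the inductive hypothesis, both $T_{(\boldsymbol{\delta},b)}$ and $T_{(\bar{\boldsymbol{\delta}},b)}$ are \emph{full}: each contains all $2^{m-k-1}$ words with its prefix. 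This is exactly the configuration the paper reaches.

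The gap is in what comes next. Your restriction comes from a complementary pair inside a single block: every word of $T$ agrees with $(\boldsymbol{\delta},b)$ somewhere in the first $k+1$ positions. Add the analogous restriction from $T_{(\bar{\boldsymbol{\delta}},b)}$ and the level-$k$ counts, and it still says nothing about bit $k+1$ for a word whose prefix $\boldsymbol{\gamma}$ differs from both $\boldsymbol{\delta}$ and $\bar{\boldsymbol{\delta}}$. For $k\ge 2$ such a $\boldsymbol{\gamma}$ exists, and it already agrees with both $\boldsymbol{\delta}$ and $\bar{\boldsymbol{\delta}}$ somewhere. So a word $(\boldsymbol{\gamma},1-b,\mathbf{t})$ passes both tests. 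Your direct route therefore only works for $k\le 1$, and "iterating over prefixes" has nothing to iterate with. The fallback of choosing the level minimal adds no information either, since your induction already gives balance at every lower level.

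The missing idea is a triple that uses both full blocks at once. Since $T$ is not a star, there is $\boldsymbol{\rho}\in T$ with $\rho_{k+1}=1-b$. Let $\mathbf{s}=(\rho_{k+2},\dots,\rho_m)$ and let $\bar{\mathbf{s}}$ be its complement. Because both blocks are full, the words $(\boldsymbol{\delta},b,\bar{\mathbf{s}})$ and $(\bar{\boldsymbol{\delta}},b,\bar{\mathbf{s}})$ lie in $T$. These two agree exactly at positions $k+1,\dots,m$, and $\boldsymbol{\rho}$ disagrees with both at every one of those positions. So the three words have no common coordinate, contradicting the $3$-wise hypothesis. With this step inserted, your argument is complete and is essentially the paper's proof.
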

\begin{poc}
We prove the claim by induction on $k$. When $k=0$, the assertion is trivial. 

Fix $k\geq 1$. First, we establish that $T_{\boldsymbol{\delta}}\neq \emptyset$ for each $\boldsymbol{\delta} \in \{0,1\}^k$. Suppose, to the contrary, that $T_{\boldsymbol{\alpha}}=\emptyset$ for some $\boldsymbol{\alpha}=(\alpha_1, \dots, \alpha_k)$. By induction, 
$$
|T_{(\alpha_1, \dots, \alpha_{k-1})}|= 2^{m-(k-1)-1} = 2^{m-k}.
$$ 
After decomposing the set above into a disjoint union
\[
T_{(\alpha_1, \dots, \alpha_{k-1})} = T_{(\alpha_1, \dots, \alpha_{k-1}, \alpha_k)} \cup T_{(\alpha_1, \dots, \alpha_{k-1}, 1-\alpha_k)}
\]
and using $T_{\boldsymbol{\alpha}}=\emptyset$, we see that $|T_{(\alpha_1, \dots, \alpha_{k-1}, 1-\alpha_k)}| = 2^{m-k}$ and thus
\[
T_{(\alpha_1, \dots, \alpha_{k-1}, 1-\alpha_k)}=\{(\alpha_1, \dots, \alpha_{k-1}, 1-\alpha_k, x_{k+1}, \ldots, x_m): x_j \in \{0,1\} \text{ for } k+1\leq j \leq m\}.
\]
Suppose 
\[
T_{(1-\alpha_1, 1-\alpha_2, \dots, 1-\alpha_{k-1}, \alpha_k)}\neq \emptyset.
\]
Pick $(1-\alpha_1, 1-\alpha_2, \dots, 1-\alpha_{k-1}, \alpha_k, y_{k+1}, \ldots, y_m) \in T_{(1-\alpha_1, 1-\alpha_2, \dots, 1-\alpha_{k-1}, \alpha_k)}$. Then \[(\alpha_1, \alpha_2, \dots, \alpha_{k-1}, 1-\alpha_k, 1-y_{k+1}, \ldots, 1-y_m) \in T_{(\alpha_1, \dots, \alpha_{k-1}, 1-\alpha_k)}\]
but 
$$
(1-\alpha_1, 1-\alpha_2, \dots, 1-\alpha_{k-1}, \alpha_k, y_{k+1}, \ldots, y_m) \text{ and }(\alpha_1, \alpha_2, \dots, \alpha_{k-1}, 1-\alpha_k, 1-y_{k+1}, \ldots, 1-y_m)
$$
do not intersect, a contradiction. Thus,
\[
T_{(1-\alpha_1, 1-\alpha_2, \dots, 1-\alpha_{k-1}, \alpha_k)}= \emptyset.
\]
By a similar argument as above, we have
\[
T_{(1-\alpha_1, 1-\alpha_2, \dots, 1-\alpha_k)}=\{(1-\alpha_1, \dots, 1-\alpha_k, x_{k+1}, \ldots, x_m): x_j \in \{0,1\} \text{ for } k+1\leq j \leq m\}.
\]
Since $T$ is not a star, it is \emph{not} the family of all binary vectors whose $k$-th bit is $1-\alpha_k$. Therefore, we can choose $\boldsymbol{\rho} = (\rho_1, \dots, \rho_k, \dots, \rho_m)\in T$ where $\rho_k=\alpha_k$. However, the following three elements 
\begin{align*}
& (\rho_1, \dots, \rho_{k-1}, \alpha_k, \rho_{k+1}, \dots, \rho_m), \\ 
& (\alpha_1, \dots, \alpha_{k-1}, 1-\alpha_k, 1-\rho_{k+1}, \dots, 1-\rho_m)\in T_{(\alpha_1, \dots, \alpha_{k-1}, 1-\alpha_k)}, \\ 
& (1-\alpha_1, 1-\alpha_2, \dots, 1-\alpha_{k-1}, 1-\alpha_{k}, 1-\rho_{k+1}, \dots, 1-\rho_m) \in T_{(1-\alpha_1, 1-\alpha_2, \dots, 1-\alpha_k)}
\end{align*}
are in $T$, but there is no coordinate in which all three words agree, contradicting the hypothesis that $T$ is a $3$-wise intersecting family. This completes the proof that $T_{\boldsymbol{\delta}}\neq\emptyset$ for each $\boldsymbol{\delta}\in\{0,1\}^k$.

Second, we show that for each $\boldsymbol{\delta}=(\delta_1, \dots, \delta_k)\in\{0,1\}^k$, 
\[
\widetilde{T_{\boldsymbol{\delta}}}=\{\widetilde{\mathbf{x}}=(x_{k+1}, \ldots, x_m)\in \{0,1\}^{m-k} \ \mid \ (\delta_1, \delta_2, \ldots, \delta_k, x_{k+1}, \ldots, x_m)\in T_{\boldsymbol{\delta}}\}
\] 
is intersecting. Given $\widetilde{\mathbf{x}}=(x_{k+1}, \dots, x_{m}), \widetilde{\mathbf{y}} = (y_{k+1}, \dots, y_{m}) \in \widetilde{T_{\boldsymbol{\delta}}}$, pick an arbitrary element 
$$
\mathbf{z} = (1-\delta_1, 1-\delta_2, \dots, 1-\delta_k, z_{k+1},\dots, z_m ) \in T_{(1-\delta_1, 1-\delta_2, \dots, 1-\delta_{k})}.
$$
Since $T$ is $3$-wise intersecting, the three binary words $(\boldsymbol{\delta},\widetilde{\mathbf{x}}), (\boldsymbol{\delta}, \widetilde{\mathbf{y}})$, and $\mathbf{z}$ intersect, that is, they agree in some coordinate. This shared bit must be in a position indexed by some $j\in \{k+1, \dots, m\}$. In particular, $\widetilde{\mathbf{x}}$ and $\widetilde{\mathbf{y}}$ intersect; as a result, $\widetilde{T_{\boldsymbol{\delta}}}\subseteq \{0,1\}^{m-k}$ is intersecting. By Lemma~\ref{lem:weak}, 
\begin{equation}\label{eq:binary-vector-claim-upper-bound}
|T_{\boldsymbol{\delta}}|=|\widetilde{T_{\boldsymbol{\delta}}}|\leq 2^{m-k-1}.
\end{equation}
Partition
\begin{equation}\label{eq:binary-vector-claim-size}
T_{(\delta_1, \delta_2, \ldots, \delta_{k-1})}=T_{(\delta_1, \delta_2, \ldots, \delta_{k-1},\delta_k)} \cup T_{(\delta_1, \delta_2, \ldots, \delta_{k-1},1-\delta_k)}.
\end{equation}
By inductive hypothesis, $|T_{(\delta_1, \delta_2, \ldots, \delta_{k-1})}|= 2^{m-k}$. Combining \eqref{eq:binary-vector-claim-upper-bound} and \eqref{eq:binary-vector-claim-size} finishes the proof of the claim. \end{poc}

We now apply Claim~\ref{claim} (which is valid since $T$ is assumed not to be a star) to complete the proof by contradiction. Setting $k=m-1$ and $\boldsymbol{\delta}=(0,0,\dots,0)\in\{0,1\}^{m-1}$, we get $|T_{\boldsymbol{\delta}}|=1$, so $T$ contains a unique word of the form $\mathbf{x}=(0,0,\dots,0,u)$ for some $u\in\{0,1\}$. Setting $\boldsymbol{\delta}=(1,1,\dots,1)$, we similarly find a unique word $\mathbf{y}=(1,1,\dots,1,v)\in T$. Since $\mathbf{x}$ and $\mathbf{y}$ agree only at position $m$, and $T$ is intersecting, we must have $u=v$.

Suppose some $\mathbf{z}\in T$ has last coordinate $1-u$. Then $\mathbf{x}$, $\mathbf{y}$, and $\mathbf{z}$ share no common coordinate: any common coordinate would have to be position $m$ (the only position where $\mathbf{x}$ and $\mathbf{y}$ agree), but $\mathbf{z}$ takes the value $1-u\neq u$ there. This contradicts the $3$-wise intersecting hypothesis. As a result, every word in $T$ has last coordinate $u$. Since the family of all binary words with last coordinate $u$ is a star of size $2^{m-1}=|T|$, it follows that $T$ equals this star; this contradicts our assumption that $T$ is not a star.
\end{proof}

\section{A new proof of Theorem~\ref{thm:alphabet-intersecting}}\label{sec:q>2}

We outline the strategy before giving the details. The proof proceeds by induction on $m$. The key idea is to partition $T$ by the first letter to produce subfamilies $\widetilde{T}_1, \ldots, \widetilde{T}_q$ of $(m-1)$-letter words. A double-counting argument, together with Claim~\ref{claim:upper-bound-alphabet} below, forces one of two scenarios for each set $S(\boldsymbol{\delta})$: either it is contained entirely in a single $\widetilde{T}_i$, or it meets each $\widetilde{T}_i$ in the same singleton. In the first scenario, the hypothesis $q\geq 3$ allows us to propagate this containment to all of $\Z_q^{m-1}$, showing that $T$ is a star. In the second scenario, all subfamilies $\widetilde{T}_i$ coincide, and their common size and intersecting property let us apply the inductive hypothesis directly.

\begin{proof}[Proof of Theorem~\ref{thm:alphabet-intersecting}]
We proceed by induction on $m$. The base case $m=1$ is trivial. We may assume, without loss of generality, that the alphabet $X$ is given by $\Z_q\colonequals \mathbb{Z}/q\mathbb{Z}$. 

Since $T$ is a maximum intersecting family, $|T|=q^{m-1}$ by Lemma~\ref{lem:weak}. We partition the words in $T$ according to the first letter by defining 
\[
T_i=\{\mathbf{x}=(x_1, \dots, x_m)\in T \ \mid \ x_1=i\}, \qquad \text{and} \qquad \widetilde{T}_i=\{\widetilde{\mathbf{x}}=(x_2, \dots, x_m) \ \mid \  (i,\widetilde{\mathbf{x}})\in T_i\}
\]
for each $1\leq i \leq q$.

Given $\boldsymbol{\delta}\in \Z_q^{m-1}$, consider the set of $q$ words given by $S(\boldsymbol{\delta})=\{\boldsymbol{\delta}, \boldsymbol{\delta}+\mathbf{1}, \dots, \boldsymbol{\delta}+(\mathbf{q-1})\}$ where $\mathbf{c}=(c, \dots, c)$ represents a constant vector in $\Z_q^{m-1}$. There are $q^{m-1}$ choices of $\boldsymbol{\delta}\in \Z_q^{m-1}$. Since each set $S(\boldsymbol{\delta})$ has $q$ elements, this yields $q^{m-2}$ distinct sets of the form $S(\boldsymbol{\delta})$. Our key observation is the following claim. 
\begin{claim}\label{claim:upper-bound-alphabet}
For each such $S(\boldsymbol{\delta})$, we have:
\[
\sum_{i=1}^{q} |\widetilde{T}_i\cap S(\boldsymbol{\delta})| \leq q.
\]
Moreover, the equality holds if and only if $S(\boldsymbol{\delta}) \subseteq \widetilde{T}_i$ for a unique $1\leq i \leq q$, or there exists $\mathbf{k}=(k, \dots, k)$ such that $S(\boldsymbol{\delta}) \cap \widetilde{T}_i=\{\boldsymbol{\delta}+\mathbf{k}\}$ for all $1\leq i \leq q$.
\end{claim}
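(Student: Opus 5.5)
The plan is to turn the claim into a statement about a small set of pairs and then use only the intersecting property of $T$. Fix $\boldsymbol{\delta}$ and set
\[
P=\{(i,a)\in\{1,\dots,q\}\times\Z_q \ \mid\ \boldsymbol{\delta}+\mathbf{a}\in\widetilde{T}_i\}.
\]
Then $\sum_{i=1}^{q}|\widetilde{T}_i\cap S(\boldsymbol{\delta})|=|P|$, so it suffices to bound $|P|$ and to identify the sets $P$ that attain the bound.

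The key observation concerns two elements $(i,a),(j,b)\in P$. They correspond to the words $(i,\boldsymbol{\delta}+\mathbf{a})$ and $(j,\boldsymbol{\delta}+\mathbf{b})$ in $T$. If $a\neq b$, the tails $\boldsymbol{\delta}+\mathbf{a}$ and $\boldsymbol{\delta}+\mathbf{b}$ differ in every coordinate, since they differ by the constant nonzero vector $\mathbf{a}-\mathbf{b}$. So these two words can only agree in the first coordinate. Because $T$ is intersecting, this gives the following rule: any two elements of $P$ have $i=j$ or $a=b$. In other words, viewing $P$ as a set of cells in a $q\times q$ grid, any two cells of $P$ share a row or a column.

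Next I show that such a $P$ lies entirely in one row or entirely in one column. Suppose $P$ contains two cells in the same row, say $(i,a)$ and $(i,b)$ with $a\neq b$, and also a cell $(j,c)$ with $j\neq i$. Then $(j,c)$ must share a column with both, which forces $c=a$ and $c=b$, a contradiction. The same argument applies with rows and columns swapped. Hence $P$ fits inside one row or one column, and $|P|\leq q$.

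For the equality case, note that $|P|=q$ means $P$ is a full row or a full column, since $q\geq 2$ prevents a set of size $q$ from being both. A full row $\{i\}\times\Z_q$ means $S(\boldsymbol{\delta})\subseteq\widetilde{T}_i$. This $i$ is unique, because $S(\boldsymbol{\delta})\subseteq\widetilde{T}_j$ for some $j\neq i$ would add further cells to $P$. A full column $\{1,\dots,q\}\times\{k\}$ means $S(\boldsymbol{\delta})\cap\widetilde{T}_i=\{\boldsymbol{\delta}+\mathbf{k}\}$ for every $i$. Conversely, each of the two configurations visibly gives $|P|=q$. The only delicate step is the translation into the grid, namely that distinct elements of $S(\boldsymbol{\delta})$ never intersect. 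Once that is in place, the rest is the short row/column argument above.
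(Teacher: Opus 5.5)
Your proposal is correct and follows essentially the same route as the paper: the paper's Case 1 (some $\widetilde{T}_i$ contains two elements of $S(\boldsymbol{\delta})$, forcing all other $\widetilde{T}_j$ to miss $S(\boldsymbol{\delta})$) is your ``two cells in one row'' case, and its Case 2 (all intersections have size at most one and must coincide) is your ``single column'' case. Both rest on the same observation, that distinct elements of $S(\boldsymbol{\delta})$ disagree in every coordinate. Your grid reformulation simply makes the row/column symmetry explicit.
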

\begin{poc}
Fix $\boldsymbol{\delta}\in \Z_q^{m-1}$. We consider two cases.

\textbf{Case 1.} There exists some $1\leq i \leq q$ such that $|S(\boldsymbol{\delta})\cap \widetilde{T}_i|\geq 2$.

Take $\boldsymbol{\delta}+\mathbf{c_1}, \boldsymbol{\delta}+\mathbf{c_2}\in S(\boldsymbol{\delta})\cap \widetilde{T}_i$ with $c_1\neq c_2$. If $\boldsymbol{\delta}+\mathbf{c_3}\in S(\boldsymbol{\delta})\cap \widetilde{T}_j$ for $j\neq i$, then $\mathbf{c_3}\neq \mathbf{c_1}$ or $\mathbf{c_3}\neq \mathbf{c_2}$; however, $(i,\boldsymbol{\delta}+\mathbf{c_1})$ or $(i,\boldsymbol{\delta}+\mathbf{c_2})$ does not intersect $(j,\boldsymbol{\delta}+\mathbf{c_3})$. This shows $S(\boldsymbol{\delta})\cap \widetilde{T}_j=\emptyset$ for each $j\neq i$. Therefore,
\[
\sum_{j=1}^{q} |\widetilde{T}_j\cap S(\boldsymbol{\delta})| = |\widetilde{T}_i\cap S(\boldsymbol{\delta})| \leq |S(\boldsymbol{\delta})| = q,
\]
with the equality holding if and only if 
$S(\boldsymbol{\delta})\subseteq \widetilde{T}_i$.

\textbf{Case 2.} For all $1\leq i\leq q$, we have $|S(\boldsymbol{\delta})\cap \widetilde{T}_i|\leq 1$.

It follows that $\sum_{i=1}^{q}|\widetilde{T}_i\cap S(\boldsymbol{\delta})|\leq q$, with the equality holding if and only if
$|S(\boldsymbol{\delta})\cap \widetilde{T}_i|=1$ for $1\leq i \leq q$. Suppose the equality holds. If these singleton sets $S(\boldsymbol{\delta})\cap \widetilde{T}_i$ do not all coincide, then a similar argument as in the previous paragraph leads to a contradiction. Hence, we can find the same constant $\mathbf{k}$ for which $S(\boldsymbol{\delta}) \cap \widetilde{T}_i=\{\boldsymbol{\delta}+\mathbf{k}\}$ for all $1\leq i\leq q$.
\end{poc}

We count triples $(\boldsymbol{\delta},j,\mathbf{w})$ with $\mathbf{w}\in \widetilde{T}_j\cap S(\boldsymbol{\delta})$ in two different ways. First, summing $|\widetilde{T}_j\cap S(\boldsymbol{\delta})|$ over all $\boldsymbol{\delta}\in \Z_q^{m-1}$ and all $1\leq j\leq q$ counts these triples by fixing $\boldsymbol{\delta}$ and $j$, and then counting the admissible choices of $\mathbf{w}$. Second, for each fixed $\mathbf{w}\in \widetilde{T}_j$, there are exactly $q$ choices of $\boldsymbol{\delta}\in \Z_q^{m-1}$ for which $\mathbf{w}\in S(\boldsymbol{\delta})$, namely $\boldsymbol{\delta}=\mathbf{w}-\mathbf{c}$ with $c\in \Z_q$. Since $\sum_{j=1}^q |\widetilde{T}_j|=|T|$, it follows that
\[
\sum_{\boldsymbol{\delta}\in \Z_q^{m-1}} \sum_{j=1}^{q} |\widetilde{T}_{j}\cap S(\boldsymbol{\delta})| = q|T| = q^{m}.
\]
The outer sum has $q^{m-1}$ terms, and each inner summand is at most $q$ by Claim~\ref{claim:upper-bound-alphabet}. Thus, for equality to hold, it must be the case that
\begin{equation}\label{eq:equality-alphabet} \sum_{i=1}^{q} |\widetilde{T}_i\cap S(\boldsymbol{\delta})| = q
\end{equation}
for each $\boldsymbol{\delta}\in \Z_q^{m-1}$. Therefore, by the ``moreover" part of Claim~\ref{claim:upper-bound-alphabet}, for each $\boldsymbol{\delta}\in \Z_q^{m-1}$, either $S(\boldsymbol{\delta})\subseteq \widetilde{T}_{i}$ for a unique choice of $i\in \{1, 2, \dots, q\}$, or there exists $k$ (which might depend on $\boldsymbol{\delta}$), such that $S(\boldsymbol{\delta})\cap \widetilde{T}_{i}$ consists of the same single element $\boldsymbol{\delta}+\mathbf{k}$ for all $i$. 

Next, we consider these two cases separately.

\smallskip

\textbf{Case 1:} there exist $1\leq i \leq q$ and $\boldsymbol{\delta} \in \Z_q^{m-1}$ such that $S(\boldsymbol{\delta}) \subseteq \widetilde{T}_i$.

We claim that $\widetilde{T}_i = \Z_q^{m-1}$ so that $T$ is a star consisting of all words whose first coordinate is $i$.

Let $\mathbf{u}\in \Z_q^{m-1}$. Since $q\geq 3$, we can find $\mathbf{v}, \mathbf{v'}\in (\Z_q\setminus \{0\})^{m-1}$ such that $\mathbf{u}=\boldsymbol{\delta}+\mathbf{v}+\mathbf{v'}$. Indeed, any element in $\Z_q=\{0, 1, \dots, q-1\}$ can be expressed as a sum of two nonzero elements: $j=1+(j-1)$ for $j\neq 1$, and $1=2+(q-1)$.

Observe that for each $\mathbf{w}\in S(\boldsymbol{\delta})$ and each $1\leq j \leq q$ with $j\neq i$, we have $\mathbf{w}+\mathbf{v} \notin \widetilde{T}_j$; for otherwise $(i,\mathbf{w})$ and $(j, \mathbf{w}+\mathbf{v})$ are both in $T$ but do not intersect. This shows that $(S(\boldsymbol{\delta})+\mathbf{v})\cap \widetilde{T}_{j}=\emptyset$ for each $j\neq i$, and thus Claim~\ref{claim:upper-bound-alphabet} forces $S(\boldsymbol{\delta})+\mathbf{v} \subseteq \widetilde{T}_i$. Consequently, $S(\boldsymbol{\delta}+\mathbf{v})\subseteq \widetilde{T}_i$. Now we apply the same argument with $\boldsymbol{\delta}$ replaced with $\widetilde{\boldsymbol{\delta}}=\boldsymbol{\delta}+\mathbf{v}$ to deduce that $S(\widetilde{\boldsymbol{\delta}}+\mathbf{v'})\subseteq \widetilde{T}_i$. In particular, $\mathbf{u}=\boldsymbol{\delta}+\mathbf{v}+\mathbf{v'}=\widetilde{\boldsymbol{\delta}}+\mathbf{v'}\in \widetilde{T}_i$. Since $u\in \Z_q^{m-1}$ is arbitrary, we have shown that $\widetilde{T}_i = \Z_q^{m-1}$, as desired.

\textbf{Case 2:} for \emph{each} $\boldsymbol{\delta}\in \Z_q^{m-1}$, there is $k=k(\boldsymbol{\delta})$ such that 
\[
S(\boldsymbol{\delta}) \cap \widetilde{T}_1 = S(\boldsymbol{\delta}) \cap \widetilde{T}_2 =\dots = S(\boldsymbol{\delta}) \cap \widetilde{T}_q = \{\boldsymbol{\delta} + \mathbf{k}\}.
\]

It follows that $\widetilde{T}_1=\widetilde{T}_2=\dots=\widetilde{T}_q$; write this common family as $U$. We claim that $U$ is intersecting. Indeed, if $\mathbf{u},\mathbf{v}\in U$, then $(1,\mathbf{u})$ and $(2,\mathbf{v})$ are both in $T$, so since $T$ is intersecting and these two words differ in the first coordinate, they must agree in one of the last $m-1$ coordinates. Hence $\mathbf{u}$ and $\mathbf{v}$ intersect. Moreover, $|U|=q^{m-2}$. By the inductive hypothesis, $U$ is a star in $\Z_q^{m-1}$, and therefore $T$ is a star in $\Z_q^m$.

\smallskip

In both cases, $T$ is a star. This completes the proof.
\end{proof}

\section*{Acknowledgments}
We are grateful to Karen Meagher for helpful discussions. We are also grateful to the anonymous referee for their valuable comments. The second author also thanks Santa Clara University for its hospitality during his visit, during which this project was initiated. 

\bibliographystyle{abbrv}
\bibliography{main}

\end{document}